\def\zb{{\Bbb Z}}
\newtheorem{theo}{Theorem}[section]
\newtheorem{lem}[theo]{Lemma}
\newtheorem{prop}[theo]{Proposition}
\theoremstyle{definition}
\newtheorem{rema}[theo]{Remark}
\numberwithin{equation}{section}
\begin{document}

\title[The saddle-point method and the Li coefficients]{The saddle-point method and the\\ Li coefficients}
\author{Kamel Mazhouda}
\address{Faculté des sciences de Monastir, Département de mathématiques, 5000 Monastir, Tunisia}
\email{kamel.mazhouda@fsm.rnu.tn}
%
\thanks{2000 Mathematics Subject Classification: 11M41 (primary), 11M06 (secondary).\\Keywords and Phrases: Selberg class, Saddle-point method, Riemann Hypothesis, Li's criterion.}

\begin{abstract}
In this paper, we apply the saddle-point method in conjunction with the theory of the N$\ddot{o}$rlund-Rice integrals to derive a precise asymptotic formula for the generalized Li coefficients established  by Omar and Mazhouda. Actually, for any function $F$ in the Selberg class $\mathcal{S}$ and under the Generalized Riemann Hypothesis, we have
$$\lambda_{F}(n)=\frac{d_{F}}{2}n\log n+c_{F}n+O(\sqrt{n}\log n),$$
with $$c_{F}=\frac{d_{F}}{2}(\gamma-1)+\frac{1}{2}\log(\lambda Q_{F}^{2}),\ \ \lambda=\prod_{j=1}^{r}\lambda_{j}^{2\lambda_{j}},$$
where $\gamma$ is the Euler constant and  the notation is as bellow.
\end{abstract}

\maketitle


\section{Introduction}
\label{sec:introduction}

Let us consider the xi-function
 $\xi(s)=s(s-1)\Gamma(s/2)\pi^{-s/2}\zeta(s)$ and the Li coefficients
 $(\lambda_n)_{n\geq 1}$ defined by
$$\lambda_{n}=\frac{1}{(n-1)!}\frac{d^{n}}{ds^{n}}\left[s^{n-1}\log\xi(s)\right]_{s=1}.$$
Then, the Li criterion says that the Riemann Hypothesis holds if and only if
the coefficients $(\lambda_n)$ are positive numbers.
 Bombieri and Lagarias \cite{2} obtained an arithmetic expression for the Li coefficients $\lambda_{n}$ and gave an asymptotic formula as $n\rightarrow\infty$. More recently, Maslanka \cite{10} computed $\lambda_{n}$ for $1\leq n\leq3300$ and empirically studied the growth behavior of the Li coefficients. Coffey \cite{3,4} studied the arithmetic formula and established a lower bounded for  the Archimedean prime contribution by means of series rearrangements using the Euler-Maclaurin summation. In \cite{11}, a generalization of  the Li criterion for  functions $F$ in the Selberg class was given, and in \cite{13} an explicit formula for the Li coefficients associated to $F$ was established. \\

The object of this paper is to derive a precise asymptotic formula for the generalized Li coefficients using the saddle-point method.\\

The Selberg class ${\mathcal S}$ consists of Dirichlet series
$$F(s)=\sum_{n=1}^{+\infty}\frac{a(n)}{n^{s}}, \qquad \Re(s)>1 $$
satisfying the following hypotheses.\\
\begin{itemize}
    \item {\bf   Analytic continuation:} there exists a non negative integer $m$ such $(s-1)^{m}F(s)$ is an entire function of finite order.  We denote by  $m_{F}$  the smallest integer  $m$  which satisfies this condition.
    \item {\bf  Functional equation:} for $1\leq j\leq r$, there are positive real numbers $Q_{F},\ \lambda_{j}$ and there are complex numbers $\mu_{j},\ \omega$ with $\Re(\mu_{j})\geq0$ and $|\omega|=1$, such that
       $$\phi_{F}(s)=\omega\overline{\phi_{F}(1-\overline{s})}$$
    where
    $$\phi_{F}(s)=F(s)Q_{F}^{s}\prod_{j=1}^{r}\Gamma(\lambda_{j}s+\mu_{j}).$$
    \item {\bf Ramanujan hypothesis:} $a(n)=O(n^{\epsilon})$.
    \item {\bf Euler product:} $F(s)$ satisfies
    $$F(s)=\prod_{p}\exp\left(\sum_{k=1}^{+\infty}\frac{b(p^{k})}{p^{ks}}\right)$$
\noindent with suitable coefficients $b(p^{k})$ satisfying $b(p^{k})=O(p^{k\theta})$ for some $\theta>\frac{1}{2}$.
\end{itemize}
 It is  expected that for every function in the Selberg class the analogue of the Riemann hypothesis holds, i.e, that all non trivial (non-real) zeros lie on the critical line $\Re(s)=\frac{1}{2}$.
The degree of $F\in{{\mathcal S}}$ is defined by
$$d_{F}=2\sum_{j=1}^{r}\lambda_{j}.$$
The degree is well defined (although the functional equation is not
unique by the Legendre's duplication formula). The logarithmic
derivative of $F(s)$ also has the Dirichlet series expression
$$-\frac{F'}{F}(s)=\sum_{n=1}^{+\infty}\Lambda_{F}(n)n^{-s},\qquad \Re(s)>1,$$
where $\Lambda_{F}(n)=b(n)\log n$ is an analogue of the Von Mongoltd
function $\Lambda(n)$ defined by
$$\Lambda(n)=\left\{\begin{array}{crll}\log p\quad  \hbox{if}&  n=p^{k}\  \hbox{with}\
k\geq1,\\
0 & \hbox{otherwise.}\end{array}\right.$$ If $N_{F}(T)$ counts the
number of zeros of $F(s)\in{{\mathcal S}}$ in the rectangle $0\leq
\Re(s)\leq1$,\ $|\Im(s)|\leq T$ (according to multiplicities), one can
show by standard contour integration the formula
$$N_{F}(T)=\frac{d_{F}}{2\pi}T\log T+c_{F}T+O(\log T),$$
in analogy to the Riemann-Von Mangoldt formula for Riemann's zeta-function $\zeta(s)$, the prototype of an element in ${\mathcal S}$. For more details concerning the Selberg class we refer to the survey of Kaczorowski and Perelli \cite{6}.
\section{The Li criterion }
\label{The Li criterion}
Let $F$ be a function in the Selberg class non-vanishing at $s=1$ and let us define the xi-function $\xi_{F}(s)$ by
$\xi_{F}(s)=s^{m_{F}}(s-1)^{m_{F}}\phi_{F}(s).$  The function $\xi_{F}(s)$ satisfies the functional equation
$\xi_{F}(s)=\omega\overline{\xi_{F}(1-\overline{s})}.$ The function $\xi_{F}$ is an entire function of order 1. Therefore by the Hadamard product,
it can be written as
$$
\xi_{F}(s)=\xi_{F}(0)\prod_{\rho}\left(1-\frac{s}{\rho}\right),
$$
where the product is over all zeros of $\xi_{F}(s)$ in the order given by $|\Im(\rho)| < T$ for
$T\rightarrow\infty$. Let $\lambda_{F}(n)$, $n\in{\zb}$, be a sequence of numbers defined by a sum over the non-trivial zeros of $F(s)$ as
$$\lambda_{F}(n)=\sum_{\rho}\left[1-\left(1-\frac{1}{\rho}\right)^{-n}\right],$$
where the sum over $\rho$ is
$$\sum_{\rho}=\lim_{T\mapsto\infty}\sum_{|\Im\rho|\leq T}.$$
These coefficients are expressible in terms of power-series coefficients of functions
constructed from the $\xi_{F}$-function. For $n\leq -1$, the Li coefficients $\lambda_{F}(n)$ correspond
to the following Taylor expansion at the point $s = 1$
$$\frac{d}{dz}\log\xi_{F}\left(\frac{1}{1-z}\right)=\sum_{n=0}^{+\infty}\lambda_{F}(n+1)z^{n}.$$
and for $n \geq1$, they correspond to the Taylor expansion at $s = 0$
$$\frac{d}{dz}\log\xi_{F}\left(\frac{-z}{1-z}\right)=\sum_{n=0}^{+\infty}\lambda_{F}(-n-1)z^{n},$$
Let $Z$ the multi-set of zeros of $\xi_{F}(s)$ (counted with multiplicity). The multi-set $Z$ is invariant under the map $\rho\longmapsto1-\overline{\rho}$, We have
$$1-\left(1-\frac{1}{\rho}\right)^{-n}=1-\left(\frac{\rho-1}{\rho}\right)^{-n}=1-\left(\frac{-\rho}{1-\rho}\right)^{n}=1-\overline{\left(1-\frac{1}{1-\overline{\rho}}\right)^{n}}$$
and this gives the symmetry $\lambda_{F}(-n)=\overline{\lambda_{F}(n)}$. Using the corollary in \cite[Theorem 1]{2}, we get the following generalization  of the Li criterion for the Riemann hypothesis.
\begin{theo} Let $F(s)$ be a function in the Selberg class ${\mathcal S}$ non-vanishing at $s=1$. All non-trivial zeros of $F(s)$ lie in the line $\Re e(s)=1/2$ if and only if $\Re\left(\lambda_{F}(n)\right)>0$ for $n=1,2..$.
\end{theo}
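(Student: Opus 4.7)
The plan is to deduce the theorem directly from the Bombieri--Lagarias criterion (the corollary to Theorem 1 of \cite{2}), which gives an unconditional equivalence between the positivity of sums of the shape $\sum_{\rho}[1-(1-1/\rho)^{-n}]$ and the location of the $\rho$'s on the line $\Re(s)=1/2$, provided the multiset of $\rho$'s satisfies two structural conditions: a convergence estimate of Cram\'er--Bombieri type, and the reflection symmetry $\rho \mapsto 1-\overline{\rho}$.

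First I would set up $\xi_{F}(s)=s^{m_{F}}(s-1)^{m_{F}}\phi_{F}(s)$ as already defined, recall that $\xi_{F}$ is entire of order $1$ (this uses the analytic-continuation axiom together with classical Stirling bounds on the gamma factors in $\phi_{F}(s)$), and record the Hadamard factorization displayed just before the theorem. The hypothesis that $F$ does not vanish at $s=1$ guarantees that no $\rho$ in the product equals $1$, so the expression $1-(1-1/\rho)^{-n}$ makes sense for every zero. From the order-one Hadamard factorization one reads off the standard bound $\sum_{\rho}(1+|\Im\rho|)^{-2}<\infty$ and $\sum_{\rho} \Re(1/\rho)$ absolutely convergent after pairing $\rho$ with $1-\overline{\rho}$, which are precisely the convergence conditions needed in \cite{2}.

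Next I would verify the reflection symmetry. The functional equation $\xi_{F}(s)=\omega\,\overline{\xi_{F}(1-\overline{s})}$ immediately implies that the multiset $Z$ of zeros of $\xi_{F}$ is stable under $\rho \mapsto 1-\overline{\rho}$, a fact already isolated in the paragraph preceding the theorem. Combined with this symmetry, the computation
$$1-\left(1-\frac{1}{\rho}\right)^{-n}=1-\overline{\left(1-\frac{1}{1-\overline{\rho}}\right)^{n}}$$
shows, after grouping each $\rho$ with its partner $1-\overline{\rho}$, that $\Re\,\lambda_{F}(n)$ equals a real sum whose positivity can be analyzed term by term exactly as in the classical argument. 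Applying the Bombieri--Lagarias corollary to the multiset $Z$ then yields the desired equivalence: all non-trivial zeros of $F(s)$ lie on $\Re(s)=1/2$ if and only if $\Re\,\lambda_{F}(n)>0$ for every $n\geq 1$.

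The main substantive point, rather than an obstacle, is the bookkeeping needed to match the sum defining $\lambda_{F}(n)$ with the one used by Bombieri and Lagarias: one has to confirm that the limit $\lim_{T\to\infty}\sum_{|\Im\rho|\leq T}$ in the definition of $\lambda_{F}(n)$ agrees with the symmetrically convergent sum required by their hypothesis. This follows from pairing zeros in conjugate pairs $\rho,\overline{\rho}$ using that the coefficients of $F$ are not assumed real but the completed product $\phi_{F}$ still has a conjugate-symmetric zero set in the relevant sense, so the symmetric partial sums converge and the application of the criterion is legitimate.
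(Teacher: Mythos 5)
Your proposal is correct and follows essentially the same route as the paper: the paper likewise defines $\xi_{F}$, uses the functional equation to get the invariance of the zero multiset under $\rho\mapsto 1-\overline{\rho}$ (whence $\lambda_{F}(-n)=\overline{\lambda_{F}(n)}$), and then simply invokes the corollary to Theorem 1 of Bombieri--Lagarias. You supply somewhat more detail than the paper does on verifying the hypotheses of that corollary (the order-one Hadamard product, the convergence condition on the zeros, and the compatibility of the symmetric truncation $|\Im\rho|\leq T$ with the pairing), which is a welcome elaboration rather than a deviation.
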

Next, we recall the following explicit formula for the coefficients $\lambda_ {F}(n)$. Let consider
the following hypothesis:\\
${\mathcal H}$ : there exist a constant $c > 0$ such that $F(s)$ is non-vanishing in the region:
$$\left\{s=\sigma+it;\ \sigma\geq1-\frac{c}{\log(Q_{F}+1+|t|)}\right\}.$$

\begin{theo}\label{th2.2} Let $F(s)$ be  a function in the Selberg class ${\mathcal S}$ satisfying ${\mathcal H}$. Then, we have
\begin{eqnarray}\label{eq2.1}
\lambda_{F}(-n)&=&m_{F}+n(\log Q_{F}-\frac{d_{F}}{2}\gamma)\nonumber\\
&-&\sum_{l=1}^{n}(_{l}^{n})\frac{(-1)^{l-1}}{(l-1)!}\ \lim_{X\longrightarrow+\infty}\left\{\sum_{k\leq X}\frac{\Lambda_{F}(k)}{k}(\log k)^{l-1}-\frac{m_{F}}{l}(\log X)^{l}\right\}\nonumber\\
&+&n\sum_{j=1}^{r}\lambda_{j}\left(-\frac{1}{\lambda_{j}+\mu_{j}}+\sum_{l=1}^{+\infty}\frac{\lambda_{j}+\mu_{j}}{l(l+\lambda_{j}+\mu_{j})}\right)\nonumber\\
&-&\sum_{j=1}^{r}\sum_{k=2}^{n}(_{k}^{n})(-\lambda_{j})^{k}\sum_{l=0}^{+\infty}\left(\frac{1}{l+\lambda_{j}+\mu_{j}}\right)^{k},
\end{eqnarray}
where $\gamma$ is the Euler constant.
 \end{theo}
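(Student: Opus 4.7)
The goal is to express $\lambda_F(-n)$ through derivatives of $\xi_F'/\xi_F$ at $s=1$, and then evaluate those derivatives via the factorization $\xi_F(s)=s^{m_F}(s-1)^{m_F}F(s)Q_F^{s}\prod_{j=1}^{r}\Gamma(\lambda_j s+\mu_j)$. Starting from $\lambda_F(-n)=\sum_\rho[1-(1-1/\rho)^{-n}]$, the binomial identity $(1-1/\rho)^{-n}=(1+(\rho-1)^{-1})^{n}=\sum_{k=0}^{n}\binom{n}{k}(\rho-1)^{-k}$ gives
$$\lambda_F(-n)=-\sum_{k=1}^{n}\binom{n}{k}\sum_\rho(\rho-1)^{-k},$$
the $k=1$ inner sum being interpreted symmetrically ($|\Im\rho|\le T$, $T\to\infty$). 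By Hadamard's product for the order-one entire function $\xi_F$, combined with the pairing $\rho\leftrightarrow 1-\overline{\rho}$ coming from the functional equation, one has $\xi_F'/\xi_F(s)=\sum_\rho(s-\rho)^{-1}$ in the same symmetric sense. Differentiating $k-1$ times and evaluating at $s=1$ then yields
$$\sum_\rho(\rho-1)^{-k}=-\frac{1}{(k-1)!}\frac{d^{k-1}}{ds^{k-1}}\frac{\xi_F'}{\xi_F}(s)\bigg|_{s=1},$$
so the problem reduces to computing the Taylor coefficients of $\xi_F'/\xi_F$ at $s=1$.

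Next I would use the decomposition
$$\frac{\xi_F'}{\xi_F}(s)=\frac{m_F}{s}+\frac{m_F}{s-1}+\frac{F'}{F}(s)+\log Q_F+\sum_{j=1}^{r}\lambda_j\,\psi(\lambda_j s+\mu_j)$$
and evaluate each summand's $(k-1)$-th derivative at $s=1$, weighted by $\binom{n}{k}$. The $\log Q_F$ piece contributes only for $k=1$, giving $n\log Q_F$. The $m_F/s$ piece contributes $m_F(-1)^{k-1}$ for each $k$, summing to $m_F$ via $\sum_{k=1}^{n}\binom{n}{k}(-1)^{k-1}=1$. For the Gamma factors, the Weierstrass expansion $\psi(z)=-\gamma-1/z+\sum_{l=1}^{\infty}z/(l(l+z))$ at $z=\lambda_j+\mu_j$ supplies the $k=1$ contribution, producing both the $-\frac{d_F}{2}\gamma$ term (via $\sum_j\lambda_j\gamma=\frac{d_F}{2}\gamma$) and the third line of (2.1); while $\psi^{(k-1)}(z)=(-1)^{k}(k-1)!\sum_{l=0}^{\infty}(l+z)^{-k}$, together with the chain-rule factor $\lambda_j^{k}$, supplies the last double sum of (2.1) for $k\geq 2$. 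Finally, the pole of $m_F/(s-1)$ is cancelled by the simple pole of $F'/F$ there; the regularized $(k-1)$-th derivative of $m_F/(s-1)+F'/F(s)$ at $s=1$ is identified, using $-F'/F(s)=\sum_k\Lambda_F(k)k^{-s}$ and Abel summation under hypothesis $\mathcal{H}$, with the stated limit $\lim_{X\to\infty}\{\sum_{k\le X}\Lambda_F(k)(\log k)^{l-1}/k-(m_F/l)(\log X)^{l}\}$; the prefactor $(-1)^{l-1}$ in (2.1) arises from differentiating $k^{-s}$.

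The principal technical obstacle is this last step: matching the pole of $F'/F$ at $s=1$ against the $m_F/(s-1)$ from the factorization, and then rewriting the convergent remainder as the explicit limit of partial Dirichlet sums with the counterterm $-(m_F/l)(\log X)^{l}$. Existence of this limit is precisely what hypothesis $\mathcal{H}$ is designed to ensure, through the standard Perron/Abel summation argument showing $\sum_{k\le X}\Lambda_F(k)(\log k)^{l-1}/k=(m_F/l)(\log X)^{l}+O(1)$ when $F$ is zero-free in the region prescribed by $\mathcal{H}$. Once this identification is in place, assembling the four contributions above produces (2.1).
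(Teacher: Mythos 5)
The paper itself offers no proof of Theorem \ref{th2.2}: it is recalled from the corrigendum [13], so there is nothing internal to compare against except the way the formula is used later. Your strategy --- expand $(1-1/\rho)^{-1}=1+(\rho-1)^{-1}$ binomially, reduce $\lambda_F$ to the Taylor coefficients of $\xi_F'/\xi_F$ at $s=1$, and read those off from the factorization $\xi_F(s)=s^{m_F}(s-1)^{m_F}Q_F^sF(s)\prod_j\Gamma(\lambda_js+\mu_j)$ --- is exactly the standard derivation behind formulas of this type (Bombieri--Lagarias for $\zeta$, Omar--Mazhouda in general), and each of your four bookkeeping steps (the $m_F$ term via $\sum_{k=1}^n\binom{n}{k}(-1)^{k-1}=1$, the $n\log Q_F$ term, the digamma expansion, and the regularized Dirichlet-series limit whose existence is what $\mathcal H$ guarantees) is the right one. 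Two concrete points, however, keep the sketch from landing on the statement as printed.

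First, your opening identity computes $\lambda_F(n)$, not $\lambda_F(-n)$: with the paper's definition, $\lambda_F(-n)=\sum_\rho[1-(1-1/\rho)^{\,n}]$, whose binomial expansion involves $\sum_\rho\rho^{-k}$, i.e.\ derivatives of $\xi_F'/\xi_F$ at $s=0$. What your route at $s=1$ produces is $\overline{\lambda_F(-n)}$ (via the symmetry $\rho\mapsto1-\overline\rho$), which differs from $\lambda_F(-n)$ by conjugating $\mu_j$ and $\Lambda_F(k)$ --- harmless for the real part, but not for the identity as stated. Second, and more substantively, carrying out your own computation does not reproduce the sign on the last line of \eqref{eq2.1}. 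Since $\psi^{(k-1)}(z)=(-1)^{k}(k-1)!\sum_{l\ge0}(l+z)^{-k}$ and the chain rule contributes $\lambda_j^{k}$, the Gamma factors contribute
$$+\sum_{j=1}^{r}\sum_{k=2}^{n}\binom{n}{k}(-\lambda_j)^{k}\sum_{l=0}^{\infty}\frac{1}{(l+\lambda_j+\mu_j)^{k}},$$
with a plus sign, whereas \eqref{eq2.1} displays a minus. The plus sign is the correct one: for $F=\zeta$, $n=2$, the $k=2$ term is $\binom{2}{2}(-\tfrac12)^2\zeta(2,\tfrac12)=\pi^2/8\approx1.234$, and only the positive choice yields $\lambda_2\approx0.0923$ (the negative choice gives about $-2.4$); it is also the sign the paper implicitly needs in the proof of Theorem \ref{th4.1}, where the $h_{n-1}$ coming out of this block must enter with the orientation that produces $+\frac{d_F}{2}n\log n$ rather than $-\frac{d_F}{2}n\log n$. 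So the minus sign in \eqref{eq2.1} is an error in the statement; as written, your sketch asserts that the digamma computation ``supplies the last double sum of (2.1)'' when it in fact supplies its negative, and you should either flag the discrepancy or track down where a compensating sign would have to come from (it does not).
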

 \noindent {\bf Examples:}\\
- In the case of the Riemann
zeta function, $m_{\zeta}=1,\ Q_{\zeta}=\pi^{-1/2}, r=1,\
\lambda_{1}=1/2$, and $\mu_{1}=0$. With the equality
$$(-1)^{k}\sum_{l=0}^{+\infty}\left(\frac{1}{2l+1}\right)^{k}=(-1)^{k}\left(1-\frac{1}{2^{k}}\right)\zeta(k),$$
we find  $\lambda_{\zeta}$ which was  established by Bombieri and Lagarias \cite[Page 281]{2}.\\
- For the  Hecke $L$-functions,  $Q_{F}=\frac{\sqrt{N}}{2\pi},\ m_{F}=0,\  \lambda_{1}=1$, and
$\mu_{1}=\frac{1}{2}$, we find  $\lambda_{E}(n)$, which was established by X. -J. Li \cite[p. 496]{9}.
\section{Saddle-point method and the N$\ddot{o}$rlund-Rice integrals }
\label{sec:Saddle-point method }
Given a complex integral with a contour traversing simple saddle-point, the saddle-point corresponds locally to a maximum of the integrand along the path. It is then natural to expect that a small neighborhood of the saddle-point might provide the dominant contribution to the integral. The saddle-point method is applicable precisely when this is the case and when this dominant contribution can be estimated by means of local expansions. The method then constitutes the complex-analytic counterpart of the method of Laplace for evaluation of real integrals depending on a large parameter, and we can regard it as being
\begin{center}
Saddle-point method=Choice of contour + Laplace's method.
\end{center}
\noindent To estimate $\int_{A}^{B}F(z)dz$, it is convenient to set $F(z)=e^{f(z)}$ where $f(z)\equiv f_{n}(z)$, involves some large parameter $n$. We chose a contour $\mathcal{C}$ through a saddle-point $\eta$ such that $f'(\eta)=0$. Next, we split the contour as $\mathcal{C}=\mathcal{C}^{(0)}\cup\mathcal{C}^{(1)}$, and the following conditions are to be verified.\\
i) On the contour $\mathcal{C}^{(1)}$ the tails integral$\int_{\mathcal{C}^{(1)}}$ is negligible
$$\int_{\mathcal{C}^{(1)}}F(z)dz=O\left(\int_{\mathcal{C}}F(z)dz\right).$$
ii) Along $\mathcal{C}^{(0)}$, a quadratic expansion,
$$f(z)=f(\eta)+\frac{1}{2}f''(\eta)(z-\eta)^{2}+O(\phi_{n})$$
is valid, with $\phi_{n}\rightarrow0$ as $n\rightarrow\infty$, uniformly with respect to $z\in{\mathcal{C}^{(0)}}$.\\
ii) The incomplete Gaussien integral taken over the central range is asymptotically equivalent to a complete Gaussien integral with $\epsilon=\pm1$ and $f''(\eta)=e^{i\varphi}|f''(\eta)|$:
$$\int_{\mathcal{C}^{(0)}}e^{\frac{1}{2}f''(\eta)(z-\eta)^{2}}dz\sim\epsilon i e^{-i\varphi/2}\int_{-\infty}^{+\infty}e^{-|f''(\eta)|\frac{x^{2}}{2}}dx\equiv\epsilon i e^{-i\varphi/2}\sqrt{\frac{2\pi}{|f''(\eta)|}}.$$
Assuming i), ii) and iii), one has, with $\epsilon=\pm1$ and $\arg(f''(\eta))=\varphi$
$$\frac{1}{2\pi}\int_{A}^{B}e^{f(z)}dz\sim\epsilon e^{-i\varphi/2}\sqrt{\frac{2\pi}{|f''(\eta)|}}e^{f(\eta)}=\pm \sqrt{\frac{2\pi}{|f''(\eta)|}}e^{f(\eta)}.$$
This method is the main tool to prove our result.
We finish this section by reviewing the definition of the N$\ddot{o}$rlund-Rice integral.
\begin{lem}\label{lem3.1}
Let $f(s)$ be holomorphic in the half-plane $\Re(s)\geq\eta_{0}-\frac{1}{2}$. Then the finite differences of the sequence $(f(k))$ admit the integral representation
$$\sum_{k=n_{0}}^{n}\left(_{k}^{n}\right)(-1)^{k}f(k)=\frac{(-1)^{n}}{2i\pi}\int_{\mathcal{C}}f(s)\frac{n!}{s(s-1)....(s-n)}ds,$$
where the contour of integration $\mathcal{C}$ encircles the integers $\{n_{0},..,n\}$ in a positive direction and is contained in $\Re(s)\geq\eta_{0}-\frac{1}{2}$.
\end{lem}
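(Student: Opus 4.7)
The plan is to apply the residue theorem directly to the integrand
$$g(s) := f(s)\,\frac{n!}{s(s-1)(s-2)\cdots(s-n)}.$$
First I would observe that $g$ is meromorphic in the half-plane $\Re(s) \geq \eta_{0}-\frac{1}{2}$: the hypothesis on $f$ grants holomorphy throughout this half-plane, while the rational factor contributes simple poles exactly at the integers $0,1,\ldots,n$. Since the positively oriented contour $\mathcal{C}$ is contained in this half-plane and encircles precisely the integers $\{n_{0},n_{0}+1,\ldots,n\}$, the contour integral evaluates to $2i\pi$ times the sum of residues at those points, and only those points.

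The key routine calculation is the residue at each simple pole $s=k$, for $n_{0}\leq k\leq n$. Splitting the product of linear factors, one gets
$$\prod_{\substack{0\leq j\leq n \\ j\neq k}}(k-j) \;=\; k!\,(-1)^{n-k}(n-k)!,$$
so that
$$\mathrm{Res}_{s=k}\,g(s) \;=\; (-1)^{n-k}\binom{n}{k}f(k).$$
Summing over $n_{0}\leq k\leq n$, then multiplying through by $(-1)^{n}$ and using the identity $(-1)^{2n-k}=(-1)^{k}$, I would obtain exactly the stated formula.

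The only bookkeeping check — and it is hardly a genuine obstacle — is that $\mathcal{C}$ can be drawn so as to lie in the half-plane of holomorphy of $f$ while simultaneously excluding any poles of the rational factor at integers $j<n_{0}$; both conditions are built into the statement of the lemma. In short, the lemma is nothing more than the residue theorem cast in a form tailored to producing finite differences, so no technical difficulty arises.
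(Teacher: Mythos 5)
Your proposal is correct and follows exactly the paper's own (one-line) argument: the integral is $2i\pi$ times the sum of the residues of $f(s)\,n!/(s(s-1)\cdots(s-n))$ at $s=n_{0},\ldots,n$, and your explicit computation $\mathrm{Res}_{s=k}=(-1)^{n-k}\binom{n}{k}f(k)$ is what the paper leaves implicit. Nothing further is needed.
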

\begin{proof} The integral on the right is the sum of its residues at $s=n_{0},..,n$, which precisely equals the sum on the left.
\end{proof}
\section{Asymptotic formula for the Li coefficients}
\label{sec:Asymptotic formula for the Li coefficients}
A natural question is to determine the asymptotic behavior of the numbers $\lambda_{F}(n)$. Our main result in this paper is stated in the following theorem.
\begin{theo}\label{th4.1}
Let $F(s)$  be a function in the Selberg class ${\mathcal S}$. Then, under the Generalized Riemann Hypothesis, we have
$$\lambda_{F}(n)=\frac{d_{F}}{2}n\log n+c_{F}n+O\left(\sqrt{n}\log n\right),$$
where $$c_{F}=\frac{d_{F}}{2}(\gamma-1)+\frac{1}{2}\log(\lambda Q_{F}^{2}),\ \ \lambda=\prod_{j=1}^{r}\lambda_{j}^{2\lambda_{j}}$$
and $\gamma$ is the Euler constant.
\end{theo}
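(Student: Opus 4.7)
\medskip

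\noindent\textbf{Proof proposal.} The starting point is the explicit formula \eqref{eq2.1} for $\lambda_F(-n)$ from Theorem \ref{th2.2}, combined with the symmetry $\lambda_F(n)=\overline{\lambda_F(-n)}$ (when appropriately interpreted under GRH, which forces the non-trivial zeros onto $\Re(s)=1/2$ and makes $\lambda_F(n)$ real up to the stated error). I would split the right-hand side of \eqref{eq2.1} into three pieces: (A) the elementary terms $m_F+n(\log Q_F-\tfrac{d_F}{2}\gamma)$, which already contribute the portion $n\log Q_F-\tfrac{d_F}{2}\gamma\, n$ to $c_F n$; (B) the arithmetic sum over $\Lambda_F(k)/k$ weighted by binomial coefficients; and (C) the archimedean sums involving $\lambda_j$ and $\mu_j$. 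The goal is to show that (B) is $O(\sqrt{n}\log n)$ under GRH, and that (C) produces the leading $\frac{d_F}{2}n\log n$ together with the remaining part of $c_Fn$.

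For block (B), I would write the inner limit as a perturbation of the contour-integral representation $\frac{1}{2i\pi}\int \bigl(-\frac{F'}{F}(s)\bigr)\frac{n!}{s(s-1)\cdots(s-n)}\,ds$, obtained by applying Lemma \ref{lem3.1} to $f(s)=-\frac{F'}{F}(s+1)$ after a standard manipulation. Shifting the contour to the line $\Re(s)=1/2+\varepsilon$ (permissible under GRH since $F$ has no zeros with $\Re(s)>1/2$ and the trivial zeros are on the left), the saddle-point method applied to the kernel $\frac{n!}{s(s-1)\cdots(s-n)}$ gives a saddle near $s=1/2$ of size $\sqrt n$, whence bound (B) by $O(\sqrt n \log n)$; the $\log n$ factor absorbs the logarithmic growth of $-F'/F$ on the critical line.

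For block (C), the key step is to recognize $\sum_{l\ge0}(l+\lambda_j+\mu_j)^{-k}=\zeta(k,\lambda_j+\mu_j)$ and apply Lemma \ref{lem3.1} to the function $f_j(s)=\lambda_j^{s}\zeta(s,\lambda_j+\mu_j)$, converting the inner binomial sum to a Nörlund--Rice integral. Using the integral representation of the Hurwitz zeta and writing the integrand in the form $e^{f_n(s)}$ with $f_n(s)=s\log\lambda_j+\log\Gamma(s+\lambda_j+\mu_j)+\log\frac{n!}{s(s-1)\cdots(s-n)}-\log\Gamma(\lambda_j+\mu_j)$, the saddle-point equation $f_n'(\eta)=0$ is dominated (for large $n$) by the digamma-like balance forcing $\eta\asymp\log\lambda_j+\log n$. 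Carrying out the local quadratic expansion as in the recipe of Section \ref{sec:Saddle-point method }, the main term of this integral is proportional to $n\log(\lambda_j n)$ with coefficient $\lambda_j$; summing over $j=1,\dots,r$ yields exactly $\frac{d_F}{2}n\log n+n\log\lambda^{1/2}$ plus lower-order contributions, with a saddle-point error of the expected size $O(\sqrt n\log n)$.

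The genuine obstacle will be step (C): justifying the contour deformation and the uniform validity of the local quadratic approximation in a regime where the saddle moves with $n$, and reconciling the various linear-in-$n$ constants produced by the subdominant terms in the Stirling expansion of $\log\Gamma$ with the digamma-type sum $\sum_j\lambda_j\bigl(-\frac{1}{\lambda_j+\mu_j}+\sum_{l\ge1}\frac{\lambda_j+\mu_j}{l(l+\lambda_j+\mu_j)}\bigr)$ already present in \eqref{eq2.1}. A careful bookkeeping of these constants, using the identity $\psi(z)=-\gamma-\frac{1}{z}+\sum_{l\ge1}\frac{z}{l(l+z)}$, is what ultimately assembles the constant $c_F=\frac{d_F}{2}(\gamma-1)+\frac{1}{2}\log(\lambda Q_F^2)$.
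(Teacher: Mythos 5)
Your decomposition into (A) elementary terms, (B) the arithmetic block, and (C) the archimedean block is exactly the paper's, and your targets for each block are correct; but the mechanism you propose for block (C) is not the one that works, and I do not think it can be made to work as described. Writing $I_j=\sum_{k=2}^{n}(_{k}^{n})(-1)^{k}\lambda_j^{k}\zeta(k,\lambda_j+\mu_j)$ and converting to a N\"orlund--Rice integral is right, but the term $\frac{d_F}{2}n\log n$ does \emph{not} come from a Laplace-type evaluation at a saddle $\eta\asymp\log(\lambda_j n)$: it comes from a residue. Pushing the contour to the left, one crosses a simple pole at $s=0$ and, crucially, a \emph{double} pole at $s=1$, where the simple pole of $\zeta(s,q)$ collides with the factor $(s-1)^{-1}$ of the kernel $\frac{n!}{s(s-1)\cdots(s-n)}$; the residue at this double pole produces the harmonic number $h_{n-1}=\log n+\gamma+O(1/n)$, and this is the sole source of $n\log n$ (Proposition \ref{pro4.3}: $I_j=(\lambda_j+\mu_j-\tfrac12)-n\lambda_j(\psi(\lambda_j+\mu_j)+\log\lambda_j^{-1}+1-h_{n-1})+a_n$). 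The saddle-point method enters only afterwards, to show that the leftover integral on $\Re(s)=-1/2$ --- after applying the functional equation of the Hurwitz zeta function --- has its saddle at $|s|\asymp\sqrt{n}$ (namely $\sigma=(1+i)\sqrt{\pi ln/k}$) and is exponentially small. A Gaussian evaluation of the type you describe yields terms of the shape $e^{f(\eta)}\sqrt{2\pi/|f''(\eta)|}$ and cannot generate the $h_{n-1}$ logarithm; the location you assign to the saddle is not a critical point of the actual phase.

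Block (B) also has a gap. Under GRH you cannot shift the contour for $\int(-\frac{F'}{F})(s+1)k_n(s)\,ds$ to the line $\Re(s+1)=\tfrac12+\varepsilon$ and bound $-F'/F$ there by a logarithm: $-F'/F$ has poles on the critical line itself and is unbounded in any fixed strip immediately to its right, so the ``$\log n$ absorbs the growth'' step fails. The paper's Lemma \ref{lem4.4} (following Lagarias) instead uses a rectangle of height $T=\sqrt n+\epsilon_n$ chosen to avoid zeros, so that $S_F(n)$ equals the truncated zero sum $\lambda_F(-n,T)$ plus contour contributions of size $O(\sqrt n\log n)$; the decisive input is then that under GRH each zero contributes $O(1)$ to $\lambda_F(-n,T)$ and the zero-counting formula gives $N_F(\sqrt n)=O(\sqrt n\log n)$. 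Your zero-density input is missing. Once blocks (B) and (C) are repaired along these lines, your final bookkeeping of the linear-in-$n$ constants via $\psi(z)=-\gamma-\frac1z+\sum_{l\ge1}\frac{z}{l(l+z)}$ is indeed how $c_F=\frac{d_F}{2}(\gamma-1)+\frac12\log(\lambda Q_F^2)$ is assembled.
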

\begin{rema}
We conjecture that the asymptotic formula for the numbers $\lambda_{F}(n)$ in  Theorem  \ref{th4.1} holds for any function in the Selberg class without any assumption.
\end{rema}
 Four our purpose, it is sufficient to study sums of the form
\begin{equation}\label{eq4.1}
H_{n}(m,k)=\sum_{l=2}^{n}(-1)^{l}\left(_{l}^{n}\right)\frac{\zeta(l,\frac{m}{k})}{k^{l}},
\end{equation}
where $m$ and $k$ are tow complex numbers, $\zeta(s,q)$ is the Hurwitz zeta function and defined for complex arguments $s$ with $\Re(s) > 1$ and $q$ with $\Re(q) > 0$ by
$$\zeta(s,q)=\sum_{n=0}^{+\infty}\frac{1}{(n+q)^{s}}.$$
\begin{prop}\label{pro4.3}
 $H_{n}(m,k)$, defined by \eqref{eq4.1}, satisfy the estimate
$$ H_{n}(m,k)=\left(\frac{m}{k}-\frac{1}{2}\right)-\frac{n}{k}\left(\psi(\frac{m}{k})+\log k+1-h_{n-1}\right)+a_{n}(m,k),$$
where the $a_{n}(m,k)$ are exponentially small:
$$
a_{n}(m,k)= \frac{1}{k}\left(\frac{2n}{\pi k}\right)^{1/4}\exp(-\sqrt{\frac{4\pi n}{k}})\cos\left(\sqrt{\frac{4\pi n}{k}}-\frac{5\pi}{8}-\frac{2\pi m}{k}\right)
+O\left(n^{-1/4}e^{-2\sqrt{\frac{\pi n}{k}}}\right).$$
Here, $h_{n}=1+\frac{1}{2}+..+\frac{1}{n}$ is a harmonic number, and $\psi(x)$ is the logarithm derivative of the Gamma function.
\end{prop}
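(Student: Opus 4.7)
The plan is to combine Lemma \ref{lem3.1} with the saddle-point method. Applying the lemma to $f(s)=\zeta(s,m/k)/k^{s}$ with $n_0=2$, one obtains
$$H_n(m,k)=\frac{(-1)^{n}}{2i\pi}\int_{\mathcal{C}}\frac{\zeta(s,m/k)}{k^{s}}\cdot\frac{n!}{s(s-1)\cdots(s-n)}\,ds,$$
with $\mathcal{C}$ a small positively oriented loop around $\{2,\ldots,n\}$. I would then enlarge $\mathcal{C}$ into a contour $\mathcal{C}'$ lying in $\Re(s)<0$, picking up the residues at $s=0$ and $s=1$. At $s=0$ only the Rice kernel is singular, with residue $(-1)^{n}$, while $\zeta(0,m/k)=\tfrac{1}{2}-\tfrac{m}{k}$: after the global sign $(-1)^{n}$ this gives the contribution $m/k-\tfrac{1}{2}$. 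At $s=1$ both $\zeta(s,m/k)=1/(s-1)-\psi(m/k)+O(s-1)$ and the kernel are simple poles, the latter with residue $(-1)^{n-1}n$ and finite part $(-1)^{n-1}n(h_{n-1}-1)$; combining with $k^{-s}=k^{-1}(1-(s-1)\log k+\cdots)$ produces exactly $-(n/k)\bigl(\psi(m/k)+\log k+1-h_{n-1}\bigr)$. Thus these two residues account for the announced closed-form main term, and the remaining integral along $\mathcal{C}'$ defines $a_{n}(m,k)$.

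To estimate $a_n$, I would insert the Hurwitz functional equation for rational shifts,
$$\zeta(s,m/k)=\frac{2\Gamma(1-s)}{(2\pi k)^{1-s}}\sum_{r=1}^{k}\cos\!\left(\tfrac{\pi(1-s)}{2}-\tfrac{2\pi rm}{k}\right)\zeta(1-s,r/k),$$
so that the integrand factors as $\Gamma(1-s)(2\pi k)^{s-1}$ times the Rice kernel $n!/[s(s-1)\cdots(s-n)]=n!\,\Gamma(s-n)/\Gamma(s+1)$ times a cosine and a Dirichlet-type factor $\zeta(1-s,r/k)\to(k/r)^{1-s}$ as $\Re(s)\to-\infty$. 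Splitting the cosine into its two complex exponentials produces two integrals $\int e^{f_{\pm}(s)}\,ds$, whose dominant contribution comes from $r=1$ and to which the saddle-point procedure of Section \ref{sec:Saddle-point method } applies.

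Expanding via Stirling, the equation $f_{\pm}'(\eta)=0$ is solved by a conjugate pair of saddles $\eta_{\pm}$ of modulus $\sqrt{n/(\pi k)}$ lying along the directions $e^{\mp i\pi/4}$, at which one computes $\Re f_{\pm}(\eta_{\pm})=-\sqrt{4\pi n/k}$ and $|f_{\pm}''(\eta_{\pm})|\sim 2\sqrt{\pi/(nk)}$. The imaginary parts $\pm\sqrt{4\pi n/k}$ of $f_{\pm}(\eta_{\pm})$, the character phase $\mp 2\pi m/k$ from the $r=1$ cosine, and the constant $-5\pi/8$ coming from the arguments of the two Stirling square roots together with $e^{\pm i\pi/2}$, assemble into the cosine in the statement. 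The Gaussian prefactor $\sqrt{2\pi/|f_{\pm}''(\eta_{\pm})|}\sim (2n/\pi k)^{1/4}$, multiplied by $e^{\Re f_{\pm}(\eta_{\pm})}$ and by the $1/k$ inherited from the Hurwitz transform, then yields the announced leading term $\tfrac{1}{k}(2n/\pi k)^{1/4}\exp(-\sqrt{4\pi n/k})\cos(\sqrt{4\pi n/k}-\tfrac{5\pi}{8}-\tfrac{2\pi m}{k})$; the $r\geq 2$ contributions, which carry the strictly smaller factors $e^{-2\sqrt{\pi nr/k}}$, together with the next Stirling correction, are absorbed in the $O(n^{-1/4}e^{-2\sqrt{\pi n/k}})$ remainder.

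The principal difficulty is the uniform verification of the saddle-point hypotheses (i)--(iii) along $\mathcal{C}'$: since $\Gamma(1-s)$ grows super-polynomially on vertical lines, the quadratic expansion of $f_{\pm}$ must be controlled on a central arc of length $\sim n^{1/4}$, and the tails must be shown negligible using the explicit Stirling remainder bounds---standard but tedious estimates.
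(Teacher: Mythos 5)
Your plan follows essentially the same route as the paper: the N\"orlund--Rice representation, a leftward contour shift collecting the residues at $s=0$ and $s=1$ for the closed-form terms, then the Hurwitz functional equation combined with the saddle-point method (with the $r=1$ term dominating) for the exponentially small remainder $a_n(m,k)$. The only blemish is a slip in your intermediate saddle data --- the saddles actually sit at modulus $\sqrt{2\pi n/k}$ along $e^{\pm i\pi/4}$, giving $|f''(\eta_\pm)|\sim\sqrt{2k/(\pi n)}$ rather than the values you quote --- but this does not affect the method, and the quantities you actually use ($\Re f(\eta_\pm)=-\sqrt{4\pi n/k}$ and the final cosine formula) agree with the paper's.
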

The previous results for the  Riemann zeta function may be regained by setting $m = k = 1$ and for  the Dirichlet
$L$-functions $m = 1$ and $k = 2$.
\begin{proof}
Convert the sum to the N$\ddot{o}$rlund-Rice integral, and extending the contour to the half-circle at positive infinity. The half-circle does not contribute to the integral. One obtains
$$H_{n}(m,k)=\frac{(-1)^{n}}{2i\pi}n!\int_{3/2-i\infty}^{3/2+i\infty}\frac{\zeta(s,\frac{m}{k})}{k^{l}s(s-1)..(s-n)}ds.$$
Moving the integral to the left, one encounters single pole at $s=0$ and a pole at $s=1$. The residue of the pole at $s=0$ is
$$Res(s=0)=\zeta(0,\frac{m}{k})=-\frac{1}{k\pi}\sum_{l=1}^{k}\sin\left(\frac{2\pi lm}{k}\right)\psi\left(\frac{l}{k}\right)=-B_{1}\left(\frac{l}{k}\right)=\frac{1}{2}-\frac{m}{k},$$
where $\psi$ is the digamma function, $B_{1}$ is the Bernoulli polynomial of order 1, and
$$Res(s=1)=\frac{n}{k}\left(\psi(\frac{m}{k})+\log k+1-h_{n-1}\right).$$
Then we obtain
$$H_{n}(m,k)=\left(\frac{m}{k}-\frac{1}{2}\right)-\frac{n}{k}\left(\psi(\frac{m}{k})+\log k+1-h_{n-1}\right)+a_{n}(m,k),
$$
where $$a_{n}(m,k)=O\left(e^{-\sqrt{Kn}}\right),$$
for a constant $K$ of order $m/k$. Indeed, we have
$$a_{n}(m,k)=\frac{(-1)^{n}}{2i\pi}n!\int_{-1/2-i\infty}^{-1/2+i\infty}\frac{\zeta(s,\frac{m}{k})}{k^{l}s(s-1)..(s-n)}.$$
Recall that the Hurwitz zeta function satisfies the following functional equation
$$\zeta\left(1-s,\frac{m}{k}\right)=\frac{2\Gamma(s)}{(k\pi k)^{s}}\sum_{l=1}^{k}\cos\left(\frac{\pi s}{2}-\frac{2\pi lm}{k}\right)\zeta\left(s,\frac{l}{k}\right).$$
Therefore,
\begin{eqnarray}\label{eq4.2}
a_{n}(m,k)&=&-\frac{n!}{2ki\pi}\sum_{l=1}^{k}\int_{3/2-i\infty}^{3/2+i\infty}\frac{1}{(2\pi)^{s}}\frac{\Gamma(s)\Gamma(s-1)}{\Gamma(s+n)}\cos\left(\frac{\pi s}{2}-\frac{2\pi lm}{k}\right)\zeta\left(s,\frac{l}{k}\right)ds\nonumber\\
&=&-\frac{n!}{2ki\pi}\sum_{l=1}^{k}e^{i\frac{2\pi lm}{k}}\int_{3/2-i\infty}^{3/2+i\infty}\frac{1}{(2\pi)^{s}}\frac{\Gamma(s)\Gamma(s-1)}{\Gamma(s+n)}e^{-i\frac{\pi s}{2}}\zeta\left(s,\frac{l}{k}\right)ds\\
&&-\frac{n!}{2ki\pi}\sum_{l=1}^{k}e^{-i\frac{2\pi lm}{k}}\int_{3/2-i\infty}^{3/2+i\infty}\frac{1}{(2\pi)^{s}}\frac{\Gamma(s)\Gamma(s-1)}{\Gamma(s+n)}e^{i\frac{\pi s}{2}}\zeta\left(s,\frac{l}{k}\right)ds.\nonumber
\end{eqnarray}
For large values of $n$, those integrals will be evaluated by means of the saddle-point method. Noting that the integrand in \eqref{eq4.2} have a minimum, on the real axis, near $s=\sigma_{0}=\sqrt{\frac{2ln}{k}}$, and so the appropriate parameter is $z=s/\sqrt{n}$. Changing $s$ by $z$, and take $z$ constant and $n$ large. Then
\begin{equation}\label{eq4.3}
a_{n}(m,k)=-\frac{1}{2i\pi}\sum_{l=1}{k}\left\{e^{i\frac{2\pi lm}{k}}\int_{\sigma_{0}-i\infty}^{\sigma_{0}+i\infty}e^{f(z)}dz+e^{-i\frac{2\pi lm}{k}}\int_{\sigma_{0}-i\infty}^{\sigma_{0}+i\infty}e^{\overline{f}(z)}dz\right\}.
\end{equation}
We have
$$f(z)=\log n!+\frac{1}{2}\log n+\phi(z\sqrt{n}),$$
with
$$\phi(s)=-s\log\left(\frac{2\pi l}{k}\right)-i\frac{\pi s}{2}+\log\left(\frac{\Gamma(s)\Gamma(s-1)}{\Gamma(s+n)}\right)+O\left(\left(\frac{l}{k+l}\right)^{s}\right),$$
using the approximation
$$\zeta(s,l/k)=(k/l)^{s}+O\left(\left(\frac{l}{k+l}\right)^{s}\right)$$
for larg $s$. Furthermore,
$$\log\zeta(s)=\sum_{n=2}^{+\infty}\frac{\Lambda(n)}{n^{s}\log n},$$
where $\Lambda(n)$ is the Von-Mangoldt function. The asymptotic expansion for the Gamma function is given by the following Stirling expansion
$$\log\Gamma(x)=(x-\frac{1}{2})\log x-x+\frac{1}{2}\log(2\pi)+\sum_{j=1}^{+\infty}\frac{B_{2j}}{2j(2j-1)x^{2j-1}},$$
where $B_{k}$ are the Bernoulli numbers. Expanding to $O(1/n)$ and collecting terms, we deduce
\begin{eqnarray}
f(z)&=&\frac{1}{2}\log n-z\sqrt{n}\left(\log\left(\frac{2\pi l}{k}\right)+i\frac{\pi}{2}+2-2\log z\right)
+\log(2\pi)\nonumber\\&&-2\log z-\frac{z^{2}}{2}+\frac{1}{6z\sqrt{n}}\left(10+z^{2}\right)\nonumber\\
&&+\frac{1}{2n}\left(1-\frac{z^{2}}{2}-\frac{z^{4}}{6}+\frac{73}{72z^{2}}\right)+O\left(n^{-1/2}\right).\nonumber
\end{eqnarray}
The saddle-point is obtained by solving the equation $f'(z)=0$, and we have $$z_{0}=(1+i)\sqrt{\frac{\pi l}{k}}.$$
Let $\sigma\equiv\sigma(n)=z_{0}\sqrt{n}=(1 + i)\sqrt{\frac{\pi ln}{k}}=\sqrt{\frac{2\pi ln}{k}}e^{i\pi/4}$, which thus is also an approximate saddle-point for $e^{f(z)}$. Now, we fix the contour of integration and provide final approximations. The
contour  goes through the saddle point $\sigma=\sigma(n)$ and symmetrically
through its complex conjugate $\overline{\sigma}=\overline{\sigma(n)}$. In the upper half-plane, it traverses $\sigma(n)$ along
a line of steepest descent whose direction, as determined from the argument of $f''(\sigma)$. The contour also includes parts of two vertical
lines of respective abscissae $Re(s) = c_{1}\sqrt{n}$ and $c_{2}\sqrt{n}$, where
$0 < c_{1} < \sqrt{\frac{2\pi ln}{k}}< c_{2} < 2\sqrt{\frac{2\pi ln}{k}}$.
The choice of the abscissae, $c_{1}$ and $c_{2}$, is not critical (it is even possible to adapt the
analysis to $c_{1} = c_{2} = \sqrt{\frac{\pi l}{k}}$).
\begin{itemize}
  \item Contribution of the vertical parts of the contour are $O\left(e^{-L_{0}\sqrt{n}}\right)$, for some $L_{0}>2\sqrt{\frac{2\pi ln}{k}}.$
  \item The slanted part of the contour is such that all the estimates of $f(z), f'(z)$ and $f''(z)$ apply. The scale of the problem is dictated by the value of $f''(\sigma)$,  which of order $O(n^{-1/2})$. We have
      $$\int_{\hbox{slanted}}=\int_{\hbox{central}}\left(1+O(n^{-1/2})\right).$$
      Using that $z_{0}=\sqrt{\frac{2\pi l}{k}}e^{i\pi/4},\ f(\sqrt{n}z_{0})=-2\sqrt{n}z_{0}$ and $$f''(\sqrt{n}z_{0})=2/(\sqrt{n}z_{0})+O(n^{-1}).$$
      Now, we use  the saddle-point formula to obtain
      $$\int_{\hbox{central}}=e^{f(\sqrt{n}z_{0})}\sqrt{\frac{2\pi}{|f''(\sqrt{n}z_{0})|}}=\left(\frac{2\pi^{3}ln}{k}\right)^{1/4}e^{i\pi/8}e^{\left(-(1+i)\sqrt{\frac{4\pi ln}{k}}\right)}.$$
      Hence
      $$\int_{\hbox{slanted}}=\left(\frac{2\pi^{3}ln}{k}\right)^{1/4}e^{i\pi/8}e^{\left(-(1+i)\sqrt{\frac{4\pi ln}{k}}\right)}+O\left(n^{-1/4}e^{-2\sqrt{\frac{\pi ln}{k}}}\right).$$
\end{itemize}
 Substituting, we obtain
\begin{equation}\label{eq4.4}
\int_{\sigma_{0}-i\infty}^{\sigma_{0}+i\infty}e^{f(z)}dz=\left(\frac{2\pi^{3}ln}{k}\right)^{1/4}e^{\frac{i\pi}{8}}\exp\left(-(1+i)\sqrt{\frac{4\pi ln}{k}}\right)\quad+O\left(n^{-1/4}e^{-2\sqrt{\frac{\pi ln}{k}}}\right).
\end{equation}
The integral for $\overline{f}$ is the complex conjugate of \eqref{eq4.4} (having a saddle-point at the complex conjugate $\overline{z_{0}}$). Finally, equations \eqref{eq4.3} and \eqref{eq4.4} together gives
\begin{eqnarray}
a_{n}(m,k)&=& \frac{1}{k}\left(\frac{2n}{\pi}\right)^{1/4}\sum_{l=1}^{k}\left(\frac{l}{k}\right)^{1/4}\exp(-\sqrt{\frac{4\pi ln}{k}})\ \cos\left(\sqrt{\frac{4\pi ln}{k}}-\frac{5\pi}{8}-\frac{2\pi lm}{k}\right)\nonumber\\
&&\quad+\ O\left(n^{-1/4}e^{-2\sqrt{\frac{\pi ln}{k}}}\right).\nonumber
\end{eqnarray}
For large $n$, only the $l=1$ term contributes significantly, and so
\begin{equation}
a_{n}(m,k)= \frac{1}{k}\left(\frac{2n}{\pi k}\right)^{1/4}\exp(-\sqrt{\frac{4\pi n}{k}})\cos\left(\sqrt{\frac{4\pi n}{k}}-\frac{5\pi}{8}-\frac{2\pi m}{k}\right)+O\left(n^{-1/4}e^{-2\sqrt{\frac{\pi n}{k}}}\right),\nonumber
\end{equation}
which means that the terms $a_{n}$ are exponentially small.
\end{proof}
\noindent {\bf Proof of Theorem \ref{th4.1}} Since, we interest to the real part of $\lambda_{F}(n)$, then, without loss of generality, we assume that $\mu_{j}$ is a real number. First, write the arithmetic formula of $\lambda_{F}(-n)$ (equation \eqref{eq2.1}) as
\begin{eqnarray}\label{eq4.5}
\lambda_{F}(-n)&=&m_{F}+n(\log Q_{F}-\frac{d_{F}}{2}\gamma)-\sum_{l=1}^{n}(_{l}^{n})\eta_{F}(l-1)\nonumber\\
&+&n\sum_{j=1}^{r}\lambda_{j}\left(-\frac{1}{\lambda_{j}+\mu_{j}}+\sum_{l=1}^{+\infty}\frac{\lambda_{j}+\mu_{j}}{l(l+\lambda_{j}+\mu_{j})}\right)-\sum_{j=1}^{r}I_{j},
\end{eqnarray}
where $$\eta_{F}(l)=\frac{(-1)^{l}}{l!}\ \lim_{X\longrightarrow+\infty}\left\{\sum_{k\leq X}\frac{\Lambda_{F}(k)}{k}(\log k)^{l}-\frac{m_{F}}{l+1}(\log X)^{l+1}\right\}$$
are the generalized Stieltjes constants and
$$I_{j}=\sum_{k=2}^{n}(_{k}^{n})(-\lambda_{j})^{k}\sum_{l=1}^{+\infty}\left(\frac{1}{l+\lambda_{j}+\mu_{j}}\right)^{k}.$$
Note that
$$I_{j}=\sum_{k=2}^{n}(_{k}^{n})(-\lambda_{j})^{k}\sum_{l=0}^{+\infty}\frac{1}{(l+\lambda_{j}+\mu_{j})^{k}}=\sum_{k=2}^{n}(_{k}^{n})(-1)^{k}\frac{\zeta(k,\lambda_{j}+\mu_{j})}{(\lambda_{j}^{-1})^{k}},$$
which, with the above notation of $H_{n}(m,k)$ (equation \eqref{eq4.1}), is equal to
$$I_{j}=H_{n}(1+\frac{\mu_{j}}{\lambda_{j}},\lambda_{j}^{-1}).$$
Since $\lambda_{j}>0$ and $\mu_{j}=Re(\mu_{j})\geq0$, applying Proposition \ref{pro4.3} with $m=1+\frac{\mu_{j}}{\lambda_{j}}$ and $ k=\lambda_{j}^{-1}$, we deduce
\begin{equation}\label{eq4.6}
I_{j}=\left(\lambda_{j}+\mu_{j}-\frac{1}{2}\right)-n\lambda_{j}\left(\psi(\lambda_{j}+\mu_{j})+\log(\lambda_{j}^{-1})+1-h_{n-1}\right)+a_{n}(1+\frac{\mu_{j}}{\lambda_{j}},\lambda_{j}^{-1}),
\end{equation}
where
\begin{eqnarray}
a_{n}(1+\frac{\mu_{j}}{\lambda_{j}},\lambda_{j}^{-1})&=& \lambda_{j}\left(\frac{2n}{\pi} \lambda_{j}\right)^{1/4}\exp(-\sqrt{4\pi n\lambda_{j}})\ \cos\left(\sqrt{4\pi n\lambda_{j}}-\frac{5\pi}{8}-2\pi(\lambda_{j}+\mu_{j})\right)\nonumber\\
&&+O\left(n^{-1/4}e^{-2\sqrt{\pi n\lambda_{j}}}\right).\nonumber
\end{eqnarray}
The $a_{n}$ are exponentially small, then
\begin{equation}\label{eq4.7}
a_{n}(1+\frac{\mu_{j}}{\lambda_{j}},\lambda_{j}^{-1})=O(1).\end{equation}
From \eqref{eq4.6} and \eqref{eq4.7}, we obtain
\begin{equation}\label{eq4.8}
I_{j}=\left(\lambda_{j}+\mu_{j}-\frac{1}{2}\right)-n\lambda_{j}\left\{\psi(\lambda_{j}+\mu_{j})+\log(\lambda_{j}^{-1})+1-h_{n-1}\right\}+O(n).
\end{equation}
Summing \eqref{eq4.8} over $j$, we get
\begin{equation}\label{eq4.9}
\sum_{j=1}^{r}I_{j}=\sum_{j=1}^{r}\left(\lambda_{j}+\mu_{j}-\frac{1}{2}\right)-n\sum_{j=1}^{r}\lambda_{j}\left\{\psi(\lambda_{j}+\mu_{j})+\log(\lambda_{j}^{-1})+1-h_{n-1}\right\}+O(n).
\end{equation}
Using the expression
$$\psi(z)=-\gamma-\frac{1}{z}+\sum_{l=1}^{+\infty}\frac{z}{l(l+z)},$$
where $\gamma$ is the Euler's constant, and the estimate
$$h_{n}=\log n-\gamma+\frac{1}{2n}+O\left(\frac{1}{2n^{2}}\right),$$
we deduce from \eqref{eq4.5} and \eqref{eq4.9} that
\begin{eqnarray}
\lambda_{F}(-n)&=&\left(\sum_{j=1}^{r}\lambda_{j}\right)n\log n+\left\{\left(\sum_{j=1}^{r}\lambda_{j}\right)(\gamma-1)+\log Q_{F}+\sum_{j=1}^{r}\lambda_{j}\log\lambda_{j}\right)n\nonumber\\
&&-\sum_{l=1}^{n}(_{l}^{n})\eta_{F}(l-1)+ O(n).\nonumber
\end{eqnarray}
Recalling that $\displaystyle{d_{F}=\sum_{j=1}^{r}\lambda_{j}}$ and noting that $\displaystyle{\lambda=\prod_{j=1}^{r}\lambda_{j}^{2\lambda_{j}}}$, we have
\begin{equation}\label{eq4.10}
\lambda_{F}(-n)=\frac{d_{F}}{2}n\log n+\left\{\frac{d_{F}}{2}(\gamma-1)+\frac{1}{2}\log\left(\lambda Q_{F}^{2}\right)\right\}n-\sum_{l=1}^{n}(_{l}^{n})\eta_{F}(l-1)+ O(n).
\end{equation}
Now, we obtain a bound for $\displaystyle{S_{F}(n)=-\sum_{l=1}^{n}(_{l}^{n})\eta_{F}(l-1)}$ in terms of $$\lambda_{F}(-n,T):=\sum_{\rho;\ |\Im\rho|\leq T}1-\left(1-\frac{1}{\rho}\right)^{n},$$ where $T$ is a parameter.
\begin{lem} \label{lem4.4}If the Generalized Riemann Hypothesis holds for $F\in{{\mathcal S}}$, then
$$S_{F}(n)=O(\sqrt{n}\log n).$$
\end{lem}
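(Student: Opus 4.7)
The strategy is to represent $S_F(n)$ as a N\"orlund-Rice contour integral, then to deform the contour under GRH and apply the saddle-point method in direct parallel with Proposition \ref{pro4.3}.

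First, I identify the generalised Stieltjes constants $\eta_F(l-1)$ as the Taylor coefficients at $s=1$ of the function
$$g(s) := -\frac{F'}{F}(s) - \frac{m_F}{s-1},$$
which is analytic at $s=1$ since the pole of $-F'/F$ of residue $m_F$ has been subtracted. By the Cauchy integral formula for these coefficients, together with the binomial identity $\sum_{l=1}^{n} \binom{n}{l}(s-1)^{-l} = (s/(s-1))^n - 1$, one obtains
$$S_F(n) = -\frac{1}{2\pi i}\oint_{|s-1|=\epsilon} g(s)\left[\left(\frac{s}{s-1}\right)^{n}-1\right] ds.$$

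Next, I deform the contour. Under GRH, the only singularities of $g(s)$ with $\Re(s) > 0$ besides $s=1$ (which is regular for $g$) lie on the critical line $\Re(s) = 1/2$, namely the non-trivial zeros of $F$; the trivial zeros and any other poles arising from the gamma factors of $\phi_F$ are confined to $\Re(s) \leq 0$. I push the small contour around $s=1$ leftwards to a vertical line at $\Re(s) = 1/2 + \delta$ for a fixed small $\delta > 0$, closed by horizontal arcs at $\pm iT$. No residue is crossed, and the closing arcs vanish as $T\to\infty$ since the integrand decays; thus $S_F(n)$ equals $\frac{1}{2\pi i}$ times this shifted vertical integral.

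On the shifted contour, I apply the saddle-point machinery of Section \ref{sec:Saddle-point method }. The phase $n\log(s/(s-1))$ combined with the amplitude $g(s)$ develops its effective stationary window at height $|\Im s| \asymp \sqrt n$, since $\log(s/(s-1)) = 1/s + O(1/s^2)$ forces $n\log(s/(s-1))$ to become of order one precisely in that range. Combining the $O(\sqrt n)$-width of stationarity with the GRH-based bound $|g(s)| \ll \log^{2}(|\Im s|+2)$ on the line $\Re(s) = 1/2 + \delta$ (the standard Selberg-class analogue of the Titchmarsh bound) produces the announced estimate $S_F(n) = O(\sqrt n\, \log n)$.

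The principal obstacle is the careful execution of this saddle-point analysis: one needs uniform control of $F'/F$ along the shifted contour (of the type furnished by GRH), a justification that the deformation is safe past a region arbitrarily close to the critical line where zeros cluster, and a matching of the width $\sqrt n$ with the logarithmic amplitude so that no spurious extra factor of $\log n$ creeps in. The announced order $O(\sqrt n\,\log n)$ is the natural output of such an estimate, mirroring the Bombieri-Lagarias bound for $\zeta(s)$ under RH and matching the error term already extracted for the gamma-factor piece in Proposition \ref{pro4.3}.
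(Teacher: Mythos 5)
Your integral representation of $S_F(n)$ is essentially the one the paper uses (the paper writes the same kernel around $s=0$ as $k_n(s)=(1+1/s)^n-1$ acting on $-\frac{F'}{F}(s+1)$), but the deformation you then perform is where the argument breaks. You push the contour to the vertical line $\Re(s)=\tfrac12+\delta$ with $\delta>0$ fixed and claim the resulting integral is $O(\sqrt n\,\log n)$. It is not: on that line, near the real axis, $\left|\frac{s}{s-1}\right|=\frac{1/2+\delta}{1/2-\delta}>1$, so the kernel $\left(\frac{s}{s-1}\right)^n-1$ is of size $e^{cn\delta}$ there, and the integral over the portion of the line with $|\Im s|=O(1)$ is exponentially large in $n$. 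The kernel is bounded only for $\Re(s)\le\tfrac12$, so to tame it you would have to take $\delta\asymp 1/n$, but then the line passes within distance $O(1/n)$ of the critical line, where under GRH the zeros lie and $F'/F$ has poles; the bound $|F'/F|\ll\log^2(|\Im s|+2)$ you invoke is simply false that close to the zeros. You flag this tension yourself ("a justification that the deformation is safe past a region arbitrarily close to the critical line") but it is not a technical loose end --- it is the reason this route cannot produce the stated bound. There is also no genuine saddle point to exploit here: the integrand has no stationary phase mechanism that rescues the region near the real axis.

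The missing idea is that the contour must \emph{cross} the critical line and collect the non-trivial zeros as residues. The paper (following Lagarias) replaces the small circle by a rectangle with right edge far to the right (at $\Re(s)=2\sqrt n$ in the shifted variable, where the Dirichlet series for $F'/F$ converges and the kernel is $O(1)$), left edge at $\Re(s)=\sigma_0\in(-3,-2)$, and horizontal edges at height $T=\sqrt n+\epsilon_n$ chosen to stay $\gg 1/\log n$ away from zeros. The residue theorem then yields $S_F(n)=\lambda_F(-n,T)+O(\sqrt n\,\log n)$, the error coming from bounding the four sides (the kernel is $O(1)$ on all of them, $F'/F=O(\log^2 T)$ on the horizontals, and the sides have length $O(\sqrt n)$). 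GRH enters not through a bound on $F'/F$ on a vertical line but through the residue sum: it gives $\left|\frac{\rho-1}{\rho}\right|=1$, so each zero contributes at most $2$ to $\lambda_F(-n,T)$, and the zero-counting formula $N_F(T)\sim \frac{d_F}{2\pi}T\log T$ then gives $\lambda_F(-n,\sqrt n)=O(\sqrt n\,\log n)$. Without collecting and counting these residues your argument has no way to see the contribution of the zeros, which is in fact the dominant source of the $\sqrt n\,\log n$ term.
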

\begin{proof} The proof is an analogous to the argument used by Lagarias in \cite{7}. We use a contour integral argument, and we introduce the kernel function
$$k_{n}:=\left(1+\frac{1}{s}\right)^{n}-1=\sum_{l=1}^{n}(^{n}_{l})\left(\frac{1}{s}\right)^{l}.$$
If $C$ is a contour enclosing the point $s=0$ counterclockwise on a circle of small enough positive radius, the residue theorem gives
$$I(n)=\frac{1}{2i\pi}\int_{C}k_{n}(s)\left(-\frac{F'}{F}(s+1)\right)ds=\sum_{l=1}^{n}(^{n}_{l})\eta_{l-1}=S_{F}(n).$$
We deform the contour to the counterclockwise oriented rectangular contour $C'$ consisting of vertical lines with real part $\Re(s)=\sigma_{0}$ and $\Re(s)=\sigma_{1}$, where we will choose $-3<\sigma_{0}<-2$, $\sigma_{1}=2\sqrt{n}$ and the horizontal lines at $\Im(s)=\pm T$, where we will choose $T=\sqrt{n}+\epsilon_{n}$, for some $0<\epsilon_{n}<1$. The residue theorem gives
\begin{eqnarray}
I'(n)&=&\frac{1}{2i\pi}\int_{C'}k_{n}(s)\left(-\frac{F'}{F}(s+1)\right)ds\nonumber\\
&=&S_{F}(n)+\sum_{\rho;\ |\Im\rho|\leq T}\left(1+\frac{1}{\rho-1}\right)^{n}-1+O(1).\nonumber
\end{eqnarray}
The term $O(1)$ evaluates the residues coming from the trivial
zeros of $F(s)$. Using the symmetry
$\rho\longmapsto1-\overline{\rho}$, we can write
$$\left(\frac{1-\overline{\rho}}{-\overline{\rho}}\right)^{n}-1=\left(\frac{\overline{\rho}-1}{\overline{\rho}}\right)^{n}-1.$$
Then
$$I'(n)=S_{F}(n)-\lambda_{F}(-n,T)+O(1).$$
We have $$|\lambda_{F}(-n,\sqrt{n})-\lambda_{F}(-n,T)|=O(\log n).$$
This follows from the observation that $|T-\sqrt{n}|<1$, that there are $O(\log n)$ zeros in an interval of length one at this height, and that for each zero $\rho=\beta+i\gamma$ with $\sqrt{n}\leq |\Im(\rho)|<\sqrt{n}+1$ there holds $$\left|\left(\frac{\rho-1}{\rho}\right)\right|\leq\left|1+\frac{1}{n}\right|^{n/2}\leq2.$$
We now choose the parameters $\sigma_{0}$ and $T$ appropriately to avoid poles of the integrand. We may choose $\sigma_{0}$ so that the contour avoids any trivial zero and $T=\sqrt{n}+\epsilon_{n}$ with $0\leq\epsilon_{n}\leq1$ so that the horizontal lines do not approach closer than $O(\log n)$ to any zero of $F(s)$. Recall from \cite{16} that for $-2<\Re(s)<2$ there holds
$$\frac{F'}{F}(s)=\sum_{\{\rho;\ |\Im(\rho-s)|<1\}}\frac{1}{s-\rho}+O\left(\log(Q_{F}(1+|s|))\right).$$
Then on the horizontal line in the interval $-2\leq\Re(s)\leq2$, we have
$$\left|\frac{F'}{F}(s+1)\right|=O(\log^{2}T).$$
The Euler product for $F(s)$ converge absolutely for $\Re(s)>1$,
hence the Dirichlet series for $\frac{F'}{F}(s)$ converge
absolutely for $\Re(s)>1$. More precisely for $\sigma=\Re(s)>1$
$$\left|\frac{F'}{F}\right|(\sigma)<\infty.$$
For $\sigma=\Re(s)>2$, we obtain the bound
$$\left|\frac{F'}{F}(s)\right|\leq\left|\frac{F'}{F}\right|(\sigma)\leq2^{-(\sigma-2)}.$$
Consider the integral $I'(n)$ on the vertical segment $(L_{1})$ having $\sigma_{1}=2\sqrt{n}$. We have
$$\left|(1-\frac{1}{s})^{n}-1\right|\leq\left(1+\frac{1}{\sigma_{1}}\right)^{n}+1\leq\left(1+\frac{1}{2\sqrt{n}}\right)^{n}\leq\exp(\sqrt{n}/2)<2^{\sqrt{n}}.$$
Then $$\left|\frac{F'}{F}(s)\right|\leq C_{0}2^{-2(\sqrt{n}+2)}.$$
Furthermore, the length of the contour is $O(\frac{n}{\log n})$, we obtain $|I'_{L_{1}}|=O(1)$. Let $s=\sigma+it$ be a point on one of the tow horizontal segment. We have $T\geq\sqrt{n}$, so that
$$\left|1+\frac{1}{s}\right|\leq1+\frac{\sigma+1}{\sigma^{2}+T^{2}}.$$
By hypothesis $T^{2}\geq n$, so for $-2\leq\sigma\leq2$, we have
$$|k_{n}(s)|\leq\left(1+\frac{3}{4+n}\right)^{n}+1=O(1)$$ and $$\left|\frac{F'}{F}(s)\right|=O(\log^{2}T)=O(\log^{2}n),$$
since we have chosen the ordinate $T$ to stay away from zeros of $F(s)$. We step across the interval $(L_{2})$ toward the right, in segments of length 1, starting from $\sigma=2$. Furthermore
$$\left|\frac{k_{n}(s+1)+1}{k_{n}(s)+1}\right|\leq\left(1+\frac{1}{T^{2}}\right)^{n}\leq e,$$
we obtain an upper bound for $|k_{n}(s)\frac{F'}{F}(s)|$ that decreases geometrically at each step. After $O(\log n)$ steps it becomes $O(1)$, and  the upper bound is
$$|I'_{L_{2},L_{4}}(n)|=O(\log^{2}n+\sqrt{n})=O(\sqrt{n}).$$
For the vertical segment $(L_{3})$ with $\Re(s)=\sigma_{0}$, we have $|k_{n}(s)|=O(1)$ and $|\frac{F'}{F}(s)|=O\left[Q_{F}(\log (|s|+1))\right]$. Since the segment $(L_{3})$ has length $O(\sqrt{n})$, we obtain
$$|I'_{L_{3}}|=O(\sqrt{n}\log n).$$
Totalling the above bounds  gives
$$S_{F}(n)=\lambda_{F}(-n,T)+O(\sqrt{n}\log n),$$
with $T=\sqrt{n}+\epsilon_{n}$. If the Generalized Riemann Hypothesis holds for $F(s)$, then we have
$\left|1-\frac{1}{\rho-1}\right|=1.$ Since each zero contributes a term of absolute value at most 2 to
$\lambda_{F}(-n,T)$, we obtain using the zero density estimate ($N_{F}(T)\sim Tlog T$)
$$\lambda_{F}(-n,T)=O(T\log T+1).$$
Therefore $\lambda_{F}(-n,\sqrt{n})=O(\sqrt{n}\log n)$, and  Lemma \ref{lem4.4} follows.
\end{proof}
Using Lemma \ref{lem4.4} and the expression \eqref{eq4.10} of $\lambda_{F}(-n)$ and $\lambda_{F}(-n)=\overline{\lambda_{F}(n)}$, we obtain
$$\lambda_{F}(n)=\frac{d_{F}}{2}n\log n+\left\{\frac{d_{F}}{2}(\gamma-1)+\frac{1}{2}\log(\lambda Q_{F}^{2})\right\}n+O(\sqrt{n}\log n),$$
which concludes the proof of  Theorem \ref{th4.1}.

\noindent{\bf Exemples :}\\- In the case of the Riemann zeta function, we have $d_{\zeta}=1,\ Q_{\zeta}=\pi^{-1/2}$, and $\lambda=1/2$. This proves again under the Riemann Hypothesis the asymptotic formula established by A. Voros in \cite[equation (17), p. 59]{17}.\\
- Also, in the case of the principal $L$-function $L(s,\pi)$ attached to an irreducible cuspidal unitary automorpohic representation of $GL(N)$, as in Rudnick and Sarnak \cite[\S 2]{14}, we have $D_{L}=N,\  Q_{L}=Q(\pi)\pi^{-N/2}$, and $\lambda=2^{-n}$. We find under the Generalized Riemann Hypothesis the asymptotic formula for $\lambda_{n}(\pi)$ established by Lagarias in \cite[equations (1.12) and (1.13), p. 4]{7}.\\

\noindent{\bf Acknowledgements}\\
I am grateful to Prof. Maciej Radziejewski  for his many valuable comments about the published paper  and   his request to clarify the proof  of some formulas (mainly equation \eqref{eq4.4}).

\end{document}